\tikzset{every picture/.style=thick}
\definecolor{lightblue}{rgb}{0.54, 0.81, 0.94}
\newcommand{\abs}[1]{\ensuremath{\left\vert{#1}\right\vert}}
\newcommand{\C}{\ensuremath{\mathbb{C}}}
\newcommand{\Cext}{\ensuremath{\mathbb{C}_{\infty}}}
\newcommand{\cfunof}[1]{\ensuremath{\left\{#1\right\}}}
\newcommand{\diag}[1]{\mathrm{diag}\ensuremath{\left(#1\right)}}
\newcommand{\disc}{\ensuremath{\mathscr{A}_0}}
\newcommand{\delt}[2]{\ensuremath{\mathbf{\Delta}_{#2}\left(#1\right)}}
\newcommand{\dt}{\ensuremath{\tfrac{\text{d}}{\text{d}t}}}
\DeclarePairedDelimiter\ceil{\lceil}{\rceil}
\newcommand{\funof}[1]{\ensuremath{\left(#1\right)}}
\DeclareMathOperator{\intr}{int}
\newcommand{\Hfty}{\ensuremath{\mathscr{H}_\infty}}
\newcommand{\norm}[1]{\ensuremath{\left\Vert #1 \right\Vert}}
\newcommand{\R}{\ensuremath{\mathbb{R}}}
\newcommand{\s}{\ensuremath{\left(s\right)}}
\newcommand{\sqfunof}[1]{\ensuremath{\left[#1\right]}}
\DeclareMathOperator{\trace}{tr}
\newcommand{\tm}{\ensuremath{\left(t\right)}}
\newtheorem{theorem}{Theorem}
\newtheorem{remark}{Remark}
\newtheorem{lemma}{Lemma}
\begin{document}

\title{A Generalisation of the Secant Criterion}
\author{Richard Pates\thanks{The author is a member of the ELLIIT Strategic Research Area at Lund University. This work was supported by the ELLIIT Strategic Research Area. This project has received funding from ERC grant agreement No 834142.}}

\maketitle

\begin{abstract}
The cyclic feedback interconnection of $n$ subsystems is the basic building block of control theory. Many robust stability tools have been developed for this interconnection. Two notable examples are the small gain theorem and the Secant Criterion. Both of these conditions guarantee stability if an inequality involving the geometric mean of a set of subsystem indices is satisfied. The indices in each case are designed to capture different core properties; gain in the case of the small gain theorem, and the degree of output-strict-passivity in the Secant Criterion. In this paper we identify entire families of other suitable indices based on mappings of the unit disk. This unifies the small gain theorem and the Secant Criterion, as well as a range of other stability criteria, into a single condition.
\end{abstract}


\subsection*{Notation}

$\R$ and $\C$ denote the real and complex fields respectively, and $\Cext=\C\cup\cfunof{\infty}$ the extended complex plane. $I_n$ denotes the $n\times{}n$ identity matrix. $\R\sqfunof{s}$ denotes the set of polynomials in the indeterminate $s$ with real coefficients, $\Hfty$ the set transfer functions of stable, linear time-invariant systems, and $\disc$ the set of functions in $\Hfty$ that are continuous on the extended imaginary axis. The H-infinity norm of $G\in\Hfty$ is written as $\norm{G}_\infty$. A pair $N,M\in\Hfty$ is said to be a left coprime factor representation of $G$ if $G=M^{-1}N$ and there exist $X,Y\in\Hfty$ such that $NX+MY=I_n$. The diagonal matrix with entries $x_1,\ldots{},x_n$ on the diagonal is denoted $\diag{x_1,\ldots{},x_n}$.

\section{Introduction}

The Secant Criterion is a stability condition for the cyclic feedback interconnection in \Cref{fig:1} \cite{Thr91}. It was originally introduced with biological applications in mind \cite{Thr91,TO78}, though it has since seen many generalisations and extensions relevant for other application domains, such as large-scale systems \cite{AS06,Arc11}. Just like the small gain theorem, the Secant Criterion guarantees stability of the feedback interconnection if the geometric mean of the `indices' $\gamma_1,\ldots{},\gamma_n$ of the $n$ individual subsystems 
\[
\bar{\gamma}={}\sqrt[n]{\gamma_1\gamma_2\cdots{}\gamma_n}
\]
satisfies an inequality. However there are several notable differences.
\begin{enumerate}
\item The inequality is different. The small gain theorem requires that
\begin{equation}
\label{eq:sec1}\bar{\gamma}<1.
\end{equation}
However the Secant Criterion instead requires that
\begin{equation}\label{eq:sec2}
\bar{\gamma}<\sec\funof{\tfrac{\pi}{n}}.
\end{equation}
\item The notion of `index' is different. In the small gain theorem, $\gamma_k$ is typically the induced norm of the \emph{k}th subsystem. However in the Secant Criterion, $\gamma_k$ quantifies the degree of output-strict-passivity \cite{Son06}.
\end{enumerate}
Both criteria have advantages and disadvantages with respect to the features of the subsystem dynamics that they capture. For example, the Secant Criterion exploits phase information in a way that allows \eqref{eq:sec2} to be satisfied by subsystems with larger H-infinity norms than would be permitted by the small gain theorem, potentially leading to a sharper stability analysis. However if even a single subsystem contains a pure delay, \eqref{eq:sec2} will always fail, which would not be the case for \eqref{eq:sec1}. In recent years, further closely related criteria have been discovered. These include results very much like the Secant Criterion, but instead based on input-feedforward-passivity indices \cite{YP10}, and also the so called large gain theorem \cite{ZM08}. These conditions also have the same essential features of (1) and (2), though the indices and inequalities again take on different forms.

\begin{figure}
\centering
\begin{tikzpicture}[>=stealth]
\node[rectangle, draw] (g1) at (1,0) {${G}_n$};
\node[circle, draw, minimum size=0.05cm] (s1) at
(2,0) {};
\node[rectangle, draw] (g2) at (5,0) {${G}_{2}$};
\node[circle, draw, minimum size=0.05cm] (s2) at
(4,0) {};
\node[circle, draw, minimum size=0.05cm] (s3) at
(6,0) {};
\node[rectangle, draw] (gn) at (7,0) {${G}_1$};
\node[circle, draw, minimum size=0
.05cm] (sn) at
(8,0) {};
\node[rectangle, draw] (fb) at (4.375,-1.5) {$-1$};
\draw[<-] (g1) -- node[above] {$u_n$} (s1);
\draw[->] (2.5,0) -- node[above,xshift=.25cm] {$y_{n-1}$} (s1);
\draw (s2) -- (3.5,0) node[above,xshift=.1cm] { $u_3$};
\draw (3,0) node {\Large $\mathbf{\cdots}$};
\draw[<-] (s2) -- node[above] {$y_2$} (g2);
\draw[<-] (g2) -- node[above] {$u_2$} (s3);
\draw[<-] (s3) -- node[above] {$y_1$} (gn);
\draw[<-] (gn) -- node[above] {$u_1$} (sn);
\draw[<-] (sn) -- (8.65,0) -- (8.65,-1.5) -- (fb);
\draw[<-] (fb) -- (.25,-1.5) -- (.25,0) -- node[above] {$y_n$} (g1);
\draw[->] (2,.7) node[above] {$d_n$} -- (s1);
\draw[->] (4,.7) node[above] {$d_3$} -- (s2);
\draw[->] (6,.7) node[above] {$d_{2}$} -- (s3);
\draw[->] (8,.7) node[above] {$d_1$} -- (sn);
\end{tikzpicture}

\caption{\label{fig:1}The cyclic feedback interconnection of $n$ subsystems.}
\end{figure}
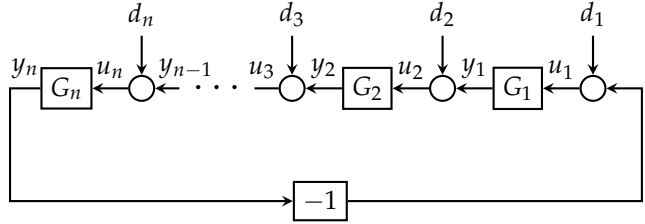

In this paper we unify and generalise these results. Our main contribution is to derive a secant like criterion that holds for general families of indices centred around mappings of the unit disk through M\"{o}bius transforms. We demonstrate that in the linear time-invariant case, all the aforementioned criteria are special cases of this result. The presented criterion also allows more general indices to be obtained, that can be tailored to different dynamical features of the subsystems. This is illustrated though a simple example in which the subsystem dynamics contain a mixture of high gain and pure delay. Finally converse results and nonlinear extensions are briefly discussed.

\section{Results}\label{sec:res}

In this section we present a generalisation of the Secant Criterion that is suitable for the analysis of the cyclic feedback interconnection of $n$ linear time-invariant single-input-single-output systems. Our central result, presented as \Cref{thm:1} in the \cref{sec:res1}, concerns the invertibility of a particular matrix defined in terms of subsets of the extended complex plane. We then show how to obtain stability criteria from this result in both the behavioral and input-output frameworks in \cref{sec:res2}. Frequency domain and state-space methods for testing the stability criteria are then presented and illustrated in \cref{sec:res3,sec:res4}. Finally, connections to existing results, and methods to define more general stability indices, are discussed in \cref{sec:res5}. Remarks connecting the results to existing literature, as well as directions for potential nonlinear extensions, are given throughout.

\subsection{A matrix invertibility result} \label{sec:res1}

Consider the following structured matrix
\begin{equation}\label{eq:maininv}
R=\begin{bmatrix}
\diag{x_1,\ldots{},x_n}&\diag{y_1,\ldots{},y_n}\\
I_n&S
\end{bmatrix},
\end{equation}
where
\begin{equation}\label{eq:cyclic}
S=\begin{bmatrix}
0&-1\\
I_{n-1}&0
\end{bmatrix}.
\end{equation}
This subsection is concerned with the invertibility of $R$ for sets of $x_k,y_k\in\C$. More specifically, we consider sets that are characterised by subsets of the extended complex plane that are obtained by first mapping the unit disk through a M\"{o}bius transform 
\[
f\funof{z}=\frac{az+b}{cz+d},
\]
and then scaling the resulting circle or half-plane by $\gamma$. The following theorem shows that given any set of scaling constants $\gamma_1,\ldots{},\gamma_n$ (one for each $\funof{x_k,y_k}$ pair), $R$ is invertible if and only if a secant like criterion is satisfied.
\begin{theorem}\label{thm:1}
Let
\[
\delt{\gamma}{\mathrm{mob}}=\cfunof{\frac{\gamma\funof{az+b}}{cz+d}\in\Cext:z\in\C,\abs{z}\leq{}1},
\]
where $a$, $b$, $c$ and $d$ are real numbers such that $ad-bc\neq{}0$. Given any positive $\gamma_1,\ldots{},\gamma_n$, the following conditions are equivalent.
\begin{enumerate}[(i)]
\item For all $x_k,y_k\in\C$ such that $x_k/y_k\in\delt{\gamma_k}{\mathrm{mob}}$, the matrix $R$ in \eqref{eq:maininv} is invertible.
\item  For $\bar{\gamma}=\sqrt[n]{\gamma_1\gamma_2\cdots{}\gamma_n}$, the inequality
\[
\begin{aligned}
\funof{b^2-a^2}\bar{\gamma}^2+2\beta\funof{ac-bd}\bar{\gamma}+d^2-c^2&>0
\end{aligned}
\]
holds with $\beta=\cos\funof{\pi/n}$ and $\beta=\cos\funof{\pi{}\funof{2\ceil{\tfrac{n}{2}}-1}/n}$.
\end{enumerate}
\end{theorem}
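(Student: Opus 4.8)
The plan is to reduce the invertibility of $R$ to a scalar condition on the product $x_1 y_1^{-1} \cdots$, and then to recognise that condition as the stated secant inequality. First I would compute $\det R$. Using the block structure in \eqref{eq:maininv} and the Schur complement with respect to the lower-right block $S$ (which is invertible since $S^{-1}=\begin{bmatrix}0&I_{n-1}\\-1&0\end{bmatrix}$ up to the obvious permutation), one gets $\det R = \det(S)\det\funof{\diag{x_1,\ldots,x_n}-\diag{y_1,\ldots,y_n}S^{-1}}$. Since $S$ is (a signed version of) the cyclic shift, $S^{-1}$ is too, and the matrix $\diag{x_k}-\diag{y_k}S^{-1}$ is a circulant-like matrix whose determinant expands to $x_1 x_2\cdots x_n - (-1)^{?}\, y_1 y_2 \cdots y_n$ (the sign coming from the single $-1$ entry in \eqref{eq:cyclic} together with the $n$-cycle structure). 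So $R$ is singular for a given choice of $(x_k,y_k)$ if and only if $\prod_k x_k = \pm\prod_k y_k$, i.e.\ $\prod_k (x_k/y_k) = \pm 1$, where the sign depends only on the parity of $n$. Hence condition (i) is equivalent to: the value $1$ (and/or $-1$, according to parity) is \emph{not} representable as a product $w_1 w_2 \cdots w_n$ with $w_k \in \delt{\gamma_k}{\mathrm{mob}}$.

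The next step is to understand the set of achievable products $\prod_k w_k$ with $w_k\in\delt{\gamma_k}{\mathrm{mob}}$. Each $\delt{\gamma_k}{\mathrm{mob}}$ is the image of the closed unit disk under $z\mapsto \gamma_k(az+b)/(cz+d)$, which is a scaled disk or half-plane in $\Cext$; the key invariant is that, being a M\"obius image of a disk, it is a ``disk'' in the spherical sense, and (because $a,b,c,d$ are real) it is symmetric about the real axis. Writing each $w_k = \gamma_k(az_k+b)/(cz_k+d)$ with $|z_k|\le 1$, the product becomes $\bar\gamma^{\,n}\prod_k (az_k+b)/(cz_k+d)$. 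The obstruction ``$\prod w_k = \epsilon$'' ($\epsilon=\pm1$) becomes a question about the range of $\prod_k (az_k+b)/(cz_k+d)$ over the polydisk $|z_k|\le 1$. By a standard argument (continuity plus the fact that each factor ranges over a full disk, so the boundary of the range of the product is attained on the distinguished boundary $|z_k|=1$), the relevant extremal configurations are the ones where all factors have equal modulus and the arguments are spread as symmetrically as possible — this is exactly where the $\cos(\pi/n)$ and $\cos(\pi(2\lceil n/2\rceil-1)/n)$ terms enter: these are the worst-case phase alignments of $n$ points that must multiply to a real number of sign $+1$ or $-1$ respectively. I would make this precise by parametrising $z_k = e^{i\theta_k}$ on the boundary, noting $(ae^{i\theta}+b)/(ce^{i\theta}+d)$ traces a circle symmetric about $\R$, and reducing the existence of a solution of $\prod_k w_k = \epsilon$ to a one-real-variable optimisation after exploiting the symmetry to put all $z_k$ at a common ``radius'' on that circle.

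Finally, carrying out that optimisation: the condition that $\epsilon$ is \emph{not} in the product set should reduce to requiring, for the extremal common value $w$ of modulus $r>0$ and phase chosen so that $n$ copies (with the symmetric phase spread) multiply to $\epsilon$, that the circle $\delt{\gamma_k}{\mathrm{mob}}$ does not reach far enough — and computing $|w|$ and the admissible phase on the circle $\{\gamma(az+b)/(cz+d):|z|=1\}$ yields, after clearing denominators, precisely $(b^2-a^2)\bar\gamma^2 + 2\beta(ac-bd)\bar\gamma + d^2-c^2 > 0$ with $\beta = \cos(\pi/n)$ for the $+1$ obstruction and $\beta=\cos(\pi(2\lceil n/2\rceil-1)/n)$ for the $-1$ obstruction. (For $n$ even these two values of $\beta$ coincide up to the role of $\pm1$; for $n$ odd they differ, and both obstructions are genuinely present, which is why both inequalities appear.) The main obstacle I anticipate is the middle step: rigorously showing that the extremum of $\prod_k (az_k+b)/(cz_k+d)$ subject to the product being real of a given sign is attained at the symmetric equal-modulus configuration, rather than some asymmetric one. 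I would handle this by a Lagrange-multiplier / exchange argument on the boundary torus, using the fact that each individual M\"obius-image circle is strictly convex in the relevant sense, so that any non-equal-modulus configuration can be perturbed to decrease the ``cost'' while maintaining the reality constraint; the real-coefficient hypothesis $ad-bc\ne 0$ guarantees the circles are well-behaved and symmetric, which is what makes the symmetric configuration optimal. The algebraic simplifications at the end are routine once the right extremal configuration is identified.
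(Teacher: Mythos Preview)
Your determinant reduction is sharper than the paper's first step, but the sign is wrong: since $\det(\lambda I_n-\Gamma S)=\lambda^n+\prod_k\gamma_k$, one finds $\det(Y-XS)=\prod_k y_k+\prod_k x_k$ for \emph{every} $n$, so singularity of $R$ is equivalent to $\prod_k(x_k/y_k)=-1$, never $+1$. Consequently the two values of $\beta$ in (ii) cannot be attributed to separate $\pm 1$ obstructions as you propose. Both arise from the single target $-1$: its $n$-th roots are $e^{i\pi(2k-1)/n}$, and $\cos(\pi/n)$, $\cos\!\bigl(\pi(2\lceil n/2\rceil-1)/n\bigr)$ are exactly the maximum and minimum of $\cos(\pi(2k-1)/n)$ over $k=1,\ldots,n$; since the inequality in (ii) is affine in $\beta$, checking the two extreme values is equivalent to checking all $n$ roots.

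The substantive gap is the ``symmetric extremum'' step. After the corrected reduction, what remains is the lemma: writing $D$ for the image of the closed unit disk under $f$, one has $-1/\bar\gamma^{\,n}\in D^n$ if and only if some $n$-th root of $-1/\bar\gamma^{\,n}$ lies in $D$. (A direct calculation shows that $(1/\bar\gamma)e^{i\theta}\in D$ is exactly the negation of the inequality in (ii) with $\beta=\cos\theta$, so this lemma is the entire remaining content.) The ``if'' direction is trivial; the ``only if'' is not, and your Lagrange/exchange sketch does not establish it: the critical-point equations for $\prod_k f(e^{i\theta_k})$ on the torus do not force the $\theta_k$ to be equal, and ``strict convexity of the M\"obius-image circle'' is not a hypothesis that by itself rules out asymmetric optima. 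In fact, via your own determinant identity this lemma is equivalent to $\mu_{\mathbf\Delta}(g(\Gamma S))=\rho(g(\Gamma S))$, an equality that fails for general matrices. The paper proves it by a genuinely different device: a Brouwer fixed-point argument produces a diagonal $\bar D$ for which $\bar D^{-1/2}\Gamma S\,\bar D^{1/2}$ is a scalar multiple of an orthogonal matrix, hence normal, so $g(\Gamma S)$ is diagonally similar to a normal matrix and the standard upper and lower bounds on $\mu_{\mathbf\Delta}$ collapse to $\rho$. That is the missing idea.
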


\begin{proof} 
Let $\Gamma=\diag{\gamma_1,\ldots{},\gamma_n}$ and $\mathbf{\Delta}$ denote the set of diagonal matrices with entries in the closed unit disk
\[
\mathbf{\Delta}=\cfunof{\diag{z_1,\ldots{},z_n}:z_k\in\C,\abs{z_k}\leq{}1}.
\]
Recall that the structured singular value (with respect to diagonal complex uncertainty) of a matrix is defined according to
\[
\mu_{\mathbf{\Delta}}\funof{Z}=\frac{1}{\min\cfunof{\abs{\gamma}:\det\funof{I_n-\gamma\Delta{}Z}=0,\Delta\in\mathbf{\Delta}}},
\]
unless no $\gamma$ exists such that $\det\funof{I_n-\gamma\Delta{}Z}=0$, in which case $\mu_{\mathbf{\Delta}}\funof{Z}=0$.  Consider the following statement.
\begin{enumerate}
\item[$\text{(i)}^*$] $\det\funof{-b\Gamma{}S+dI_n}\neq{}0$ and $\mu_\mathbf{\Delta}\funof{g\funof{\Gamma{}S}}<1$, where
\[
g\funof{Z}=\funof{-aZ+cI_n}\funof{-bZ+dI_n}^{-1}.
\]
\end{enumerate}
We will first show that (i) is equivalent to $\text{(i)}^*$. This connects (i) to  $\mu$-analysis, unlocking some techniques that will help to demonstrate that $\text{(i)}^*$ is equivalent to (ii) (the results we will require from $\mu$-analysis are fairly standard, and can all be found in \cite{PD93}).

$\text{(i)}\Longleftrightarrow{}\text{(i)}^*$: Let $X=\diag{x_1\ldots{}x_n}$ and $Y=\diag{y_1,\ldots{}y_n}$, and observe that
\[
R\begin{bmatrix}-S&0\\I_n&I_n\end{bmatrix}=\begin{bmatrix}Y-XS&Y\\0&I_n\end{bmatrix}.
\]
Since $S$ is invertible (in fact $SS^{\mathsf{T}}=I_n$), the above implies that $R$ is invertible if and only if
\[
\det\funof{Y-XS}=\det\funof{\begin{bmatrix}X&Y\end{bmatrix}\begin{bmatrix}-S\\I_n\end{bmatrix}}\neq{}0.
\]
It follows from the definition of $\delt{\gamma_k}{\mathrm{mob}}$ that $x_k/y_k\in\delt{\gamma_k}{\mathrm{mob}}$ if and only if there exists a $\delta_k\in\C$ and a $\lambda_k\in\C$ such that
\[
\abs{\delta_k}\leq{}1,\,\lambda_k\neq{}0,\,\text{and}\,\lambda_k\begin{bmatrix}x_k&y_k\end{bmatrix}=\begin{bmatrix}\delta_k&1\end{bmatrix}\begin{bmatrix}a&c\\b&d\end{bmatrix}\begin{bmatrix}\lambda_k&0\\0&1\end{bmatrix}.
\]
Putting $\Delta=\diag{\delta_1,\ldots{},\delta_n}$ and $\Lambda=\diag{\lambda_1,\ldots{},\lambda_n}$ shows that
\[
\begin{bmatrix}X&Y\end{bmatrix}=\Lambda^{-1}\begin{bmatrix}\Delta&I_n\end{bmatrix}\begin{bmatrix}aI_n&cI_n\\bI_n&dI_n\end{bmatrix}\begin{bmatrix}\Gamma{}&0\\0&I_n\end{bmatrix}.
\]
It follows from the above that (i) holds if and only if
\begin{equation}\label{eq:1}
\det\funof{\begin{bmatrix}\Delta&I_n\end{bmatrix}\begin{bmatrix}-a\Gamma{}S+cI_n\\-b\Gamma{}S+dI_n\end{bmatrix}}\neq{}0
\end{equation}
for all $\Delta\in\mathbf{\Delta}$. Since $0\in\mathbf{\Delta}$, we see from the above that (i) implies that $\det\funof{-b\Gamma{}S+dI_n}\neq{}0$. Multiplying \eqref{eq:1} from the right by $\det(\funof{-b\Gamma{}S+dI_n}^{-1})$ then shows that $\text{(i)}$ implies that
\[
\det\funof{I+\Delta{}g\funof{\Gamma{}S}}\neq{}0
\]
for all $\Delta\in\mathbf{\Delta}$. Since for all $\theta\in\R$, 
\[
\Delta\in\mathbf{\Delta}\Longleftrightarrow{}\exp\funof{i\theta}\Delta\in\mathbf{\Delta},
\]
the above implies that
\[
\det\funof{I-\gamma\Delta{}g\funof{\Gamma{}S}}\neq{}0
\]
for all $\abs{\gamma}\leq{}1$ and $\Delta\in\mathbf{\Delta}$, and so (i)$\,\implies\text{(i)}^*$. Reversing these final steps shows that under the hypothesis of $\text{(i)}^*$, \eqref{eq:1} holds, and so $\text{(i)}^*\!\!\implies$(i), establishing the desired equivalence.

$\text{(i)}^*\implies{}\text{(ii)}$: We will proceed by showing that $\mu_{\mathbf{\Delta}}\funof{g\funof{\Gamma{}S}}$ can be determined analytically in terms of $a,b,c,d$ and $\bar{\gamma}$. The statement in (ii) will then follow by rewriting the inequality in $\text{(i)}^*$ in terms of this description. First introduce the set of diagonal positive semi-definite matrices with unit trace
\[
\mathcal{D}=\cfunof{D:D=\diag{d_1,\ldots{},d_n},d_k\geq{}0,\trace\funof{D}=1}.
\]
We will determine $\mu_{\mathbf{\Delta}}\funof{g\funof{\Gamma{}S}}$ based on the following bounds
\[
\rho\funof{g\funof{\Gamma{}S}}\leq{}\mu_{\mathbf{\Delta}}\funof{g\funof{\Gamma{}S}}\leq{}\inf_{D\in\intr{}\mathcal{D}}\norm{D^{\frac{1}{2}}g\funof{\Gamma{}S}D^{-\frac{1}{2}}},
\]
where $\rho\funof{\cdot}$ denotes the spectral radius and $\norm{\cdot}$ the matrix two-norm. Consider now
\[
h\funof{D}=\Gamma{}SDS^{\mathsf{T}}\Gamma{}/\trace\funof{\Gamma{}SDS^{\mathsf{T}}\Gamma{}}.
\]
Since
\begin{equation}\label{eq:4}
S\begin{bmatrix}
d_1&0&\cdots{}&0\\
0&d_2&\ddots{}&\vdots{}\\
\vdots{}&\ddots{}&\ddots{}&0\\
0&\cdots{}&0&d_n
\end{bmatrix}S^\mathsf{T}=
\begin{bmatrix}
d_n&0&\cdots{}&0\\
0&d_1&\ddots{}&\vdots{}\\
\vdots{}&\ddots{}&\ddots{}&0\\
0&\cdots{}&0&d_{n-1}
\end{bmatrix},
\end{equation}
we see that $h:\mathcal{D}\rightarrow{}\mathcal{D}$. Since $\mathcal{D}$ is compact and convex, Brouwer's fixed point theorem guarantees that there exists a $\bar{D}\in\mathcal{D}$ such that $h\funof{\bar{D}}=\bar{D}$. It further follows from \eqref{eq:4} that the diagonal entries of $\bar{D}$ are all non-zero (else by \eqref{eq:4} they must all be zero, meaning that $\bar{D}\notin\mathcal{D}$), and so $\bar{D}$ is invertible. Therefore
\[
\begin{aligned}
\bar{D}^{-\frac{1}{2}}\Gamma{}S\bar{D}^{\frac{1}{2}}\funof{\bar{D}^{-\frac{1}{2}}\Gamma{}S\bar{D}^{\frac{1}{2}}}^{\mathsf{T}}&=\trace\funof{\Gamma{}S\bar{D}S^{\mathsf{T}}\Gamma}I_n,\\
\end{aligned}
\]
which implies that $\bar{D}^{-\frac{1}{2}}\Gamma{}S\bar{D}^{\frac{1}{2}}$ is a normal matrix. Therefore 
\[
\bar{D}^{\frac{1}{2}}g\funof{\Gamma{}S}\bar{D}^{-\frac{1}{2}}=g\funof{\bar{D}^{-\frac{1}{2}}\Gamma{}S\bar{D}^{\frac{1}{2}}}
\]
is also normal, meaning that
\begin{equation}\label{eq:rhomu}
\norm{\bar{D}^{\frac{1}{2}}g\funof{\Gamma{}S}\bar{D}^{-\frac{1}{2}}}=\rho\funof{g\funof{\Gamma{}S}}=\mu_{\mathbf{\Delta}}\funof{g\funof{\Gamma{}S}}.
\end{equation}
We will now obtain an analytical expression for $\rho\funof{g\funof{\Gamma{}S}}$ in terms of $a,b,c,d$ and $\bar{\gamma}$. First observe that
\[
\det\funof{\lambda{}I_n-\Gamma{}S}=\lambda^n+\gamma_1\gamma_2\cdots{}\gamma_n,
\]
which implies that the spectrum of $\Gamma{}S$ is given by
\begin{equation}\label{eq:spec}
\sigma\funof{\Gamma{}S}=\cfunof{\bar{\gamma}\exp\funof{i\pi\funof{2k-1}/n}:k=1,\ldots{},n}.
\end{equation}
Since $\sigma\funof{g\funof{\Gamma{}S}}=g\funof{\sigma\funof{\Gamma{}S}}$,
\begin{equation}\label{eq:5}
\rho\funof{g\funof{\Gamma{}S}}=\max\cfunof{\abs{g\funof{\lambda}}:\lambda\in\sigma\funof{\Gamma{}S}}.
\end{equation}
Direct substitution shows that given any $\theta\in\R$, 
\[
\abs{g\funof{\bar{\gamma}\exp\funof{i\theta}}}<1
\]
if and only if
\begin{equation}\label{eq:6}
\funof{b^2-a^2}\bar{\gamma}^2+2\funof{ac-bd}\cos\funof{\theta}\bar{\gamma}+d^2-c^2>0.
\end{equation}
Therefore by \eqref{eq:spec} and \eqref{eq:5}, \eqref{eq:6} holds for
\[
\theta\in\cfunof{\pi\funof{2k-1}/n:k=1,\ldots{},n},\]
and (ii) follows by setting $k=1$ and $k=\ceil{\tfrac{n}{2}}$.

$\text{(ii)}\implies\text{(i)}^*$: First note that for any integer $k$,
\[
\cos\funof{\pi\funof{2\ceil{\tfrac{n}{2}}-1}/n}\leq{}\cos\funof{\pi\funof{2k-1}/n}\leq{}\cos\funof{\pi/n}.
\]
Therefore given any $\theta\in\cfunof{\pi\funof{2k-1}/n:k=1,\ldots{},n}$, (ii) implies that \eqref{eq:6} holds, and so
\[
\max\cfunof{\abs{g\funof{\lambda}}:\lambda\in\sigma\funof{\Gamma{}S}}<1.
\]
This implies that $d/b\notin\sigma\funof{\Gamma{}S}$ (note that $g\funof{d/b}=\cfunof{\infty}$), which implies that $\det\funof{-b\Gamma{}S+dI_n}\neq{}0$. Therefore $g\funof{\Gamma{}S}$ exists, and so by \eqref{eq:rhomu} and \eqref{eq:5},
\[
\mu_{\mathbf{\Delta}}\funof{g\funof{\Gamma{}S}}<1,
\]
which implies $\text{(i)}^*$ as required.
\end{proof}

\subsection{Stability criteria based on \Cref{thm:1}}\label{sec:res2}

In this subsection we show show how to use \Cref{thm:1} to obtain stability criteria for the cyclic feedback interconnection illustrated in \Cref{fig:1}. We will first consider the autonomous case (i.e. no external disturbances), and derive conditions in the behavioral framework that apply whenever the subsystems have dynamics described by ordinary differential equations. We will then introduce the effect of external disturbances, and perform analysis in the input-output setting for subsystem dynamics that admit coprime factor representations. For simplicity we will restrict our attention to the linear time-invariant case throughout. Nonlinear extensions are briefly discussed in \Cref{rem:nonlinear} in \cref{sec:res3}.

\subsubsection{The autonomous case}

Suppose we have a system described by a set of differential equations on the form
\begin{equation}\label{eq:autbeh}
R\funof{\dt}w=0,
\end{equation}
where $R\in\R^{n\times{}n}\sqfunof{s}$. It is shown in \cite[Theorem 7.2.2]{PW98} that the dynamics in \eqref{eq:autbeh} are asymptotically stable, meaning that every (weak) solution to \eqref{eq:autbeh} satisfies $\lim_{t\rightarrow{}\infty}w\tm=0$, if and only if $\det\funof{R\s}\neq{}0$ for all $s$ in the closed right half-plane.

Consider now the cyclic feedback interconnection where each subsystem has dynamics described by a set of differential equations on the form
\begin{equation}\label{eq:subsys}
M_k\funof{\dt}y_k=N_k\funof{\dt}u_k,\,k=1,\ldots{},n,
\end{equation}
where $M_k,N_k\in\R\sqfunof{s}$. In the absence of disturbances (i.e. the autonomous case), the cyclic feedback corresponds to the equation
\[
u=Sy,
\]
where $S$ is as in \eqref{eq:cyclic}. Putting these equations together and writing them on the form in \eqref{eq:autbeh} gives
\begin{equation}\label{eq:cycbeh}
\begin{bmatrix}
N\funof{\dt}&M\funof{\dt}\\
I_n&S
\end{bmatrix}
\begin{bmatrix}u\\-y\end{bmatrix}=0,
\end{equation}
where
\[
\begin{aligned}
N&=\diag{N_1,\ldots{},N_n}\;\;\text{and}\\
M&=\diag{M_1,\ldots{},M_n}.
\end{aligned}
\]
Observe that the matrix in \eqref{eq:cycbeh} is on precisely the form of the structured matrix in \eqref{eq:maininv} from \cref{sec:res1}. It therefore follows immediately from \Cref{thm:1} that given any positive $\gamma_1,\ldots{},\gamma_n$, if the inequality in  \Cref{thm:1}(ii) holds, then the dynamics in \eqref{eq:cycbeh} are asymptotically stable for all $M_k,N_k\in\R\sqfunof{s}$ such that
\[
N_k\s/M_k\s\in\delt{\gamma_k}{\mathrm{mob}}
\]
for all $s$ in the closed right half-plane. As would be expected, the converse also holds, as demonstrated by the following lemma.

\begin{lemma}\label{lem:1}
Let $a,b,c,d\in\R$ satisfy $ad-bc\neq{}0$. Given any positive $\gamma_1,\ldots{},\gamma_k$ with geometric mean $\bar{\gamma}=\sqrt[n]{\gamma_1\gamma_2\cdots{}\gamma_n}$, if the inequality
\[
\begin{aligned}
\funof{b^2-a^2}\bar{\gamma}^2+2\beta\funof{ac-bd}\bar{\gamma}+d^2-c^2&\leq{}0
\end{aligned}
\]
holds for $\beta=\cos\funof{\pi/n}$ or $\beta=\cos\funof{\pi{}\funof{2\ceil{\tfrac{n}{2}}-1}/n}$, then there exist $M_k,N_k\in\R\sqfunof{s}$ such that
\begin{equation}\label{eq:mncond}
N_k\s/M_k\s\in\cfunof{\frac{\gamma\funof{az+b}}{cz+d}\in\Cext:z\in\C,\abs{z}\leq{}1}
\end{equation}
for all $s$ in the closed right half-plane, and \eqref{eq:cycbeh} is not asymptotically stable.
\end{lemma}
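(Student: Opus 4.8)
The plan is to reduce the instability claim to a determinant condition and then to use \Cref{thm:1} to locate an unstable ``frequency'' explicitly. By \cite[Theorem 7.2.2]{PW98}, the dynamics \eqref{eq:cycbeh} fail to be asymptotically stable exactly when the polynomial matrix appearing there is singular at some $s_0$ in the closed right half-plane. Running the reduction from the proof of \Cref{thm:1} with $X=N\s$ and $Y=M\s$ shows that the determinant of this matrix at $s$ is a nonzero constant multiple of $\det\funof{M\s-N\s S}=\prod_k M_k\s+\prod_k N_k\s$. So it suffices to construct real polynomials $M_k,N_k$ satisfying \eqref{eq:mncond} throughout the closed right half-plane, together with a point $s_0$ there at which $\prod_k N_k(s_0)=-\prod_k M_k(s_0)$.

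The crux is that the failure of the inequality in \Cref{thm:1}(ii) pins down an explicit point of $\delt{1}{\mathrm{mob}}$. Inverting the M\"{o}bius transform, $\rho\in\delt{1}{\mathrm{mob}}$ if and only if $\abs{d\rho-b}\leq\abs{a-c\rho}$; substituting $\rho=\bar\gamma^{-1}\exp\funof{i\theta}$, expanding, and clearing $\bar\gamma^2>0$ turns this condition into the negation of \eqref{eq:6} with $\beta=\cos\theta$. Since the hypothesis supplies the negation of \eqref{eq:6} for $\theta=\pi/n$ or $\theta=\pi\funof{2\ceil{n/2}-1}/n$, there is an odd integer $m\in\cfunof{1,\,2\ceil{n/2}-1}$ with $\rho:=\bar\gamma^{-1}\exp\funof{i\pi m/n}\in\delt{1}{\mathrm{mob}}$. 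Setting $r_k:=\gamma_k\rho\in\delt{\gamma_k}{\mathrm{mob}}$ then gives $\prod_k r_k=\bar\gamma^n\rho^n=\exp\funof{i\pi m}=-1$, so the required unstable configuration of ratios exists, and it remains only to realise it with a single real rational function evaluated at a finite point.

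For this, put $z_0=\funof{d\rho-b}/\funof{a-c\rho}$, which lies in the closed unit disk and is finite since $ad\neq bc$, and choose a real Cayley-type transform $q\s$ mapping the closed right half-plane into the closed unit disk with $q^{-1}(z_0)$ finite: take $q\s=\funof{1-s}/\funof{1+s}$ if $z_0\neq-1$, and $q\s=\funof{s-1}/\funof{s+1}$ if $z_0=-1$. Let $N_k,M_k\in\R\sqfunof{s}$ be the numerator and denominator of $\gamma_k\funof{a\,q\s+b}/\funof{c\,q\s+d}$; these are of degree at most one and not identically zero because $ad\neq bc$. For $s$ in the closed right half-plane, $N_k\s/M_k\s=\gamma_k\funof{a\,q\s+b}/\funof{c\,q\s+d}$ lies in $\gamma_k\cfunof{\funof{az+b}/\funof{cz+d}:\abs{z}\leq1}=\delt{\gamma_k}{\mathrm{mob}}$, so \eqref{eq:mncond} holds; and at the finite point $s_0:=q^{-1}(z_0)$ of the closed right half-plane, $N_k(s_0)/M_k(s_0)=\gamma_k\rho=r_k$. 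Since each $r_k$ is finite and nonzero we have $M_k(s_0)\neq0$, hence $\prod_k M_k(s_0)+\prod_k N_k(s_0)=\funof{\prod_k M_k(s_0)}\funof{1+\prod_k r_k}=0$, and \eqref{eq:cycbeh} is therefore not asymptotically stable by the determinant criterion above.

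The only genuine subtleties are the computation identifying the failure of \Cref{thm:1}(ii) with membership of the explicit point $\rho$ in $\delt{1}{\mathrm{mob}}$ — essentially \eqref{eq:6} read in reverse — and the degenerate case $z_0=-1$, in which the plain Cayley transform would force $s_0=\infty$; this is why the second transform is kept in reserve. One should also note that in the half-plane case, where $\infty\in\delt{\gamma_k}{\mathrm{mob}}$ and $M_k$ may vanish at some $s^*$ in the closed right half-plane, \eqref{eq:mncond} still holds at $s^*$ because the value $N_k(s^*)/M_k(s^*)$ is then $\infty\in\delt{\gamma_k}{\mathrm{mob}}$.
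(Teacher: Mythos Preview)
Your proof is correct and takes a genuinely different route from the paper's. The paper invokes \Cref{thm:1} as a black box to obtain abstract destabilising ratios $x_k/y_k\in\delt{\gamma_k}{\mathrm{mob}}$, and then interpolates each one at the fixed point $s=i$ using cubed Blaschke factors, producing degree-three polynomials $N_k,M_k$. You instead compute the destabilising ratios explicitly: you observe that the failure of \Cref{thm:1}(ii) forces a single point $\rho=\bar\gamma^{-1}\exp\funof{i\pi m/n}$ into $\delt{1}{\mathrm{mob}}$, so that the common choice $r_k=\gamma_k\rho$ both lies in $\delt{\gamma_k}{\mathrm{mob}}$ and satisfies $\prod_k r_k=-1$. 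You then realise all the $r_k$ simultaneously with a single Cayley transform, yielding degree-one polynomials and a variable interpolation point $s_0$. What you gain is a more self-contained and explicit argument with lower-degree witnesses; what the paper gains is brevity, since the existence step is offloaded to \Cref{thm:1} and the interpolation at the fixed point $s=i$ avoids your case split on $z_0=-1$. Both arguments rest on the same underlying computation~\eqref{eq:6}.
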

\begin{proof}
By \Cref{thm:1} there exist $x_k,y_k\in\C$ such that $x_k/y_k\in\delt{\gamma_k}{\mathrm{mob}}$ and
\[
\det\funof{\begin{bmatrix}
\diag{x_1,\ldots{},x_n}&\diag{y_1,\ldots{},y_n}\\
I_n&S
\end{bmatrix}
}=0.
\]
Hence the result follows if we can find $M_k,N_k\in\R\sqfunof{s}$ such that \eqref{eq:mncond} holds, and $N_k\s/M_k\s=x_k/y_k$ for some $s$ in the closed right half-plane. The following construction works with $s=i$.

From the definition of $\delt{\gamma}{\mathrm{mob}}$, there exist positive constants $\alpha_k\leq{}1$, and $\theta_k\in[-2\pi,0)$ such that
\[
x_k/y_k=\frac{\gamma_k\funof{a\alpha_k\exp\funof{i\theta_k}+b}}{c\alpha_k\exp\funof{i\theta_k}+d}.
\]
Now consider
\[
z_k\s=\alpha_k\funof{\frac{1-\phi_ks}{1+\phi_ks}}^3,
\]
where $\phi_k=\tan\funof{-\theta_k/6}$. It is easily shown that
\[
z_k\funof{i}=\alpha_k\exp\funof{i\theta_k},
\]
and also that that $\phi_k>0$ which implies that $\abs{z_k\s}\leq{}1$ for all $s$ in the closed right half-plane. Setting
\[
\begin{bmatrix}
N_k\s&M_k\s
\end{bmatrix}=\begin{bmatrix}\alpha_k\funof{1-\phi_ks}^3&\funof{1+\phi_ks}^3\end{bmatrix}
\begin{bmatrix}
\gamma_ka&c\\\gamma_kb&d
\end{bmatrix}
\]
completes the proof.
\end{proof}

\subsubsection{The input-output case}

We will now shift focus slightly, and study stability of the feedback interconnection described by the equations
\begin{equation}\label{eq:ipop}
\begin{aligned}
\hat{y}_k\s&=G_k\s{}\hat{u}_k\s,\,k=1,\ldots{},n,\\
\hat{u}\s&=S\hat{y}\s+\hat{d}\s.
\end{aligned}
\end{equation}
In the above $G_k$ is a transfer function describing the dynamics of the \emph{k}th subsystem in the cyclic feedback interconnection, $\hat{u}$, $\hat{y}$, $\hat{d}$ are the Laplace transforms of the vectors of inputs and outputs of these subsystems, and a set of external disturbances, respectively. 

The basis for our input-output analysis of the cyclic feedback interconnection is the following simple lemma concerning stability of \eqref{eq:ipop}.

\begin{lemma}\label{lem:2}
Let $G=\diag{G_1,\ldots{},G_n}$ be a transfer function with left coprime factor representation $G=M^{-1}N$, where $M,N\in\Hfty^{n\times{}n}$. Then the transfer function from $d$ to $y$ in \eqref{eq:ipop} is stable if and only if
\begin{equation}\label{eq:stabcond}
\begin{bmatrix}N&M\\I_n&S\end{bmatrix}^{-1}\in\Hfty^{n\times{}n}.
\end{equation}
\end{lemma}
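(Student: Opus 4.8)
The plan is to turn \eqref{eq:ipop} into a statement about the invertibility of a square transfer function, and then to read off the closed-loop map as a single block of the inverse in \eqref{eq:stabcond}. \textbf{First} I would derive the closed-loop map. Since $\hat y = G\hat u$ with $G = M^{-1}N$ gives $M\hat y = N\hat u$, substituting $\hat u = S\hat y + \hat d$ and eliminating $\hat u$ yields $\funof{M - NS}\hat y = N\hat d$. Writing $W = M - NS$, the loop is well posed exactly when $W$ is invertible as a rational matrix; this is the same as $\det W$ not being identically zero, the same as $\det\funof{I_n - G\s S}$ not being identically zero, and (since a block row swap gives $\det\mathcal R = \pm\det W$, where $\mathcal R$ denotes the matrix in \eqref{eq:stabcond}) the same as $\mathcal R$ being invertible as a rational matrix. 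In that case the transfer function from $d$ to $y$ is $T_{dy} = W^{-1}N$.

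\textbf{Second}, I would compute $\mathcal R^{-1}$ by block elimination, using that its lower-left block $I_n$ and $W$ are invertible. A direct multiplication check gives
\[
\begin{bmatrix} N & M \\ I_n & S\end{bmatrix}^{-1} = \begin{bmatrix} -SW^{-1} & I_n + SW^{-1}N \\ W^{-1} & -W^{-1}N\end{bmatrix}.
\]
Its $(2,2)$ block is $-T_{dy}$, so if $\mathcal R^{-1}\in\Hfty$ then $T_{dy}\in\Hfty$; this is the easy implication.

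\textbf{Third}, and this is the step I expect to be the crux, I would prove the converse, which is where left coprimeness enters. Suppose $T_{dy} = W^{-1}N\in\Hfty$. Because $M = W + NS$ and $S$ is a constant real matrix, we also get $W^{-1}M = I_n + \funof{W^{-1}N}S\in\Hfty$. Taking $X,Y\in\Hfty$ with $NX + MY = I_n$ from the left coprime factorisation,
\[
W^{-1} = W^{-1}\funof{NX + MY} = \funof{W^{-1}N}X + \funof{W^{-1}M}Y\in\Hfty.
\]
Substituting back into the block-inverse formula, each of $-SW^{-1}$, $I_n + SW^{-1}N$, $W^{-1}$ and $-W^{-1}N$ then lies in $\Hfty$, so $\mathcal R^{-1}\in\Hfty$, and the equivalence is established.

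\textbf{The main obstacle} is this converse: the key realisation is that controlling the single block $W^{-1}N$ already forces $W^{-1}M$ into $\Hfty$ via $M = W + NS$, after which the Bézout identity $NX + MY = I_n$ lets one peel off $W^{-1}$ itself. Without coprimeness the last step fails, since $W^{-1}$ could carry right-half-plane poles that happen to be cancelled on forming $W^{-1}N$. I would also phrase the statement so that ``the transfer function from $d$ to $y$ is stable'' and ``$\mathcal R^{-1}\in\Hfty$'' both implicitly include well-posedness of the loop, which, as noted above, is automatic as soon as either side is assumed.
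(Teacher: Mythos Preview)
Your proof is correct and follows essentially the same route as the paper. Both arguments reduce \eqref{eq:stabcond} to the condition $W^{-1}=(M-NS)^{-1}\in\Hfty$ (the paper via a constant block factorisation of $\mathcal R$, you via an explicit block-inverse formula), and both handle the converse by first observing that $T_{dy}\in\Hfty$ forces $W^{-1}M\in\Hfty$ (your identity $W^{-1}M=I_n+(W^{-1}N)S$ is exactly the paper's $(I_n-GS)^{-1}=I_n+(I_n-GS)^{-1}GS$ written in coprime-factor form) and then peeling off $W^{-1}$ via the B\'ezout identity $NX+MY=I_n$.
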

\begin{proof}
First note that the transfer function from $d$ to $y$ is stable if and only if
\begin{equation}\label{eq:Gcond}
\funof{I-GS}^{-1}G\in\Hfty^{n\times{}n}.
\end{equation}
Furthermore
\[
\begin{aligned}
\begin{bmatrix}N&M\\I_n&S\end{bmatrix}&=\begin{bmatrix}N&M\\I_n&S\end{bmatrix}\begin{bmatrix}-S&0\\I_n&I_n\end{bmatrix}\begin{bmatrix}-S^{\mathsf{T}}&0\\S^{\mathsf{T}}&I_n\end{bmatrix},\\
&=\begin{bmatrix}M-NS&M\\0&S\end{bmatrix}\begin{bmatrix}-S^{\mathsf{T}}&0\\S^{\mathsf{T}}&I_n\end{bmatrix},\\
\end{aligned}
\]
which implies that
\begin{equation}\label{eq:eqivcond}
\text{\eqref{eq:stabcond}}\Longleftrightarrow{}\funof{M-NS}^{-1}\in\Hfty^{n\times{}n}.
\end{equation}
Since $N\in\Hfty^{n\times{}n}$ and $\funof{M-NS}^{-1}N=\funof{I_n-GS}^{-1}G$, \eqref{eq:eqivcond}$\implies$\eqref{eq:Gcond}. To show the converse implication, first note that 
\[
\funof{I_n-GS}^{-1}GS=I_n-\funof{I_n-GS}^{-1}.
\]
Therefore \eqref{eq:Gcond} implies that
\[
\funof{I_n-GS}^{-1}\funof{GX+Y}=\funof{M-NS}^{-1}\funof{NX+MY}
\]
is in $\Hfty^{n\times{}n}$ for any $X,Y\in\Hfty^{n\times{}n}$. Since $N$ and $M$ are left coprime factors, $X$ and $Y$ can be chosen such that $NX+MY=I_n$, and so \eqref{eq:Gcond}$\implies$\eqref{eq:eqivcond} as required.
\end{proof}

\Cref{lem:2} shows that in the input-output setting, stability of the cyclic feedback interconnection once again reduces to a question about invertibility of a matrix on the form of \eqref{eq:maininv} from \cref{sec:res1}. Under a few minor technical restrictions, we can turn this into a condition that must be satisfied pointwise in $s$. For example, given any positive $\gamma_1,\ldots{},\gamma_n$, if the inequality in condition (ii) of \Cref{thm:1} holds, then the dynamics in \eqref{eq:ipop} are asymptotically stable for all $G_k=M_k^{-1}N_k$, where $M_k,N_k\in\disc$ satisfy
\begin{equation}\label{eq:stab2}
N_k\s/M_k\s\in\delt{\gamma_k}{\mathrm{mob}}
\end{equation}
for all $s$ in the extended closed right half-plane (i.e. including $s=\cfunof{\infty}$). The converse of this condition can also be shown by making a construction analogous to that in the proof of \Cref{lem:1} based on rational functions. This can be done using standard interpolation results for rational functions (see for example \cite[Lemma 1.14]{Vin00}, which can also enforce additional requirements, such as making $G_k$ strictly proper).

\begin{remark}
That $G_k$ has a coprime factor representation that is continuous on on the extended imaginary axis is equivalent to requiring that $G_k$ be approximable to arbitrary precision (in the graph topology) with rational functions, see \cite[Theorem 7.1]{Vin00}.
\end{remark}

\subsection{Frequency domain and state-space tests}\label{sec:res3}

In the previous section we showed that the inequality in \Cref{thm:1}(ii) guarantees stability of the cyclic feedback interconnection for all subsystems with dynamics that satisfy a condition on the form
\begin{equation}\label{eq:condcond}
N_k\s/M_k\s\in\delt{\gamma_k}{\mathrm{mob}},
\end{equation}
for all $s$ in the (extended) closed right half-plane. In the autonomous case $N_k$ and $M_k$ were polynomials, and in the input-output case they were coprime functions in $\disc$. In this section we will turn the analysis around, and explain how to check whether a given pair of functions $N_k$ and $M_k$ satisfy \eqref{eq:condcond}. 

First note that for the classes of functions considered, $N_k\s/M_k\s$ is a well defined point in \Cext{} if and only if
\begin{equation}\label{eq:stabilizable}
N_k\s\neq{}0\;\;\text{or}\;\;M_k\s\neq{}0.
\end{equation}
This does not impose a restriction in the input-output case as described, since there $N_k,M_k\in\disc$ were assumed to be coprime, which guarantees that the above holds for all $s$ in the extended closed right half-plane. However no such assumption was imposed in the autonomous case. In this case we must assume that \eqref{eq:stabilizable} holds for all $s$ in the closed right half-plane, which is equivalent to requiring that the subsystem dynamics in \eqref{eq:subsys} are behaviorally stabilizable \cite[Theorem 5.2.30]{PW98}.

Assuming \eqref{eq:stabilizable}, next note that for $ad-bc\neq{}0$ and $\gamma>0$,
\[
w=\frac{\gamma\funof{az+b}}{cz+d}\;\;\Longleftrightarrow{}\;\;z=\frac{dw/\gamma-b}{-cw/\gamma+a}.
\]
If we set
\[
H_k=\funof{-cG_k/\gamma_k+a}^{-1}\funof{dG_k/\gamma_k-b},
\]
where $G_k=M_k^{-1}N_k$, it then follows that \eqref{eq:condcond} holds for all $s$ in the (extended) closed right half-plane if and only if
\begin{equation}\label{eq:condcondcond}
\norm{H_k}_\infty{}\leq{}1.
\end{equation}
For the classes of functions considered, \eqref{eq:condcondcond} can be checked with frequency domain techniques. More specifically \eqref{eq:condcondcond} holds if and only if $H_k$ is stable, and the Nyquist diagram of $H_k$ lies in the closed unit disk. This condition can be translated into a requirement in terms of $G_k$ as follows.
\begin{enumerate}
\item Plot the Nyquist diagram of $G_k$.
\item Map the unit disk through the M\"{o}bius transform $\tfrac{az+b}{cz+d}$. This will result in a generalised disk (the interior or exterior of a circle, or a half-plane).
\item Scale the resulting generalised disk so that it contains the Nyquist diagram. 
\item Check that $\frac{dG_k/\gamma-b}{a-cG_k/\gamma}$ is stable, where $\gamma>0$ is the scaling factor from (3). If it is, \eqref{eq:condcond} holds with $\gamma_k=\gamma$.
\end{enumerate}
If in addition $H_k$ is a real rational function, \eqref{eq:condcondcond} can be checked with state-space techniques using the Kalman-Yakubovic-Popov lemma. More specifically, if
\[
C\funof{sI_m-A}^{-1}B+D,
\]
is a minimal realisation of $H_k$, then, by \cite[Theorems 3 and 4]{Wil71}, \eqref{eq:condcondcond} holds if and only if there exists a positive semi-definite $P$ such that the matrix
\begin{equation}\label{eq:lmi}
\begin{bmatrix}
A^{\mathsf{T}}P+PA+C^{\mathsf{T}}C&C^{\mathsf{T}}D+PB\\D^{\mathsf{T}}C+B^{\mathsf{T}}P&D^{\mathsf{T}}D-1
\end{bmatrix}
\end{equation}
is negative semi-definite. Given any $A$, $B$, $C$ and $D$, the above can be checked using convex programming. A suitable $\gamma_k$ can then be found using a line search.

\begin{remark}\label{rem:nonlinear}
Although we have only considered the linear time-invariant case in this paper, \eqref{eq:condcondcond} indicates how the derived conditions might be extended into the nonlinear setting. In the input-output case, \eqref{eq:condcondcond} suggests that if we replace the transfer functions with operators, stability will be achieved if the operator $H_k$ is non-expansive. Furthermore for the cyclic interconnection of subsystems described by nonlinear ordinary differential equations, the dissipation inequality associated with \eqref{eq:lmi} might be used to derive criteria within the dissipativity framework of \cite{Wil72}.
\end{remark}

\subsection{Numerical example}\label{sec:res4}

\begin{figure}
\centering
\input{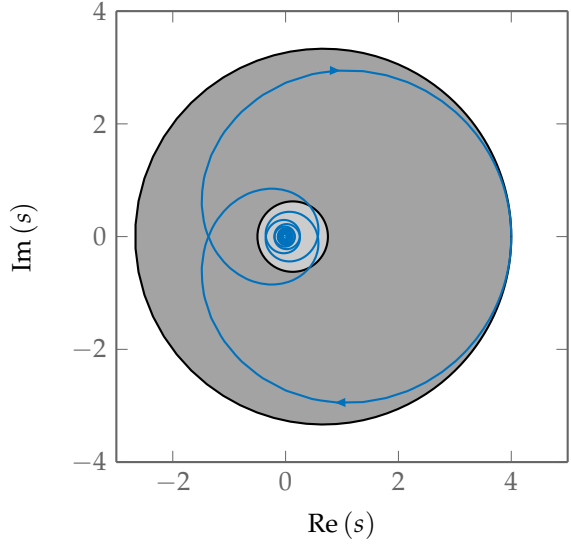}
\caption{\label{fig:2}Constructions used to determine $\gamma_1$ in \cref{sec:res4}.} 
\end{figure}

The following example illustrates how to apply the frequency domain test from the previous subsection. Suppose that $n=2$, $a=2$, $b=1$, $c=1$, $d=3$,
\[
G_1\s=\frac{4e^{-0.7s}}{s+1},\;\;\text{and}\;\;G_2\s=K,
\]
where $K>0$. In this case, the inequality from\Cref{thm:1} becomes
\[
-3\bar{\gamma}^2+8>0,
\]
so we are interested in finding the smallest values of $\gamma_1$ and $\gamma_2$ such that \eqref{eq:stab2} holds. \Cref{fig:2} illustrates how to determine $\gamma_1$. The blue curve is the Nyquist diagram of $G_1$, and the light grey disk the result of mapping the unit disk through the M\"{o}bius transform. Scaling up the light grey disk by a factor of 16/3 gives the dark grey disk. This is the smallest scaling factor required to completely contain the Nyquist diagram. Since the Nyquist diagram makes no encirclements of the point $16a/3c$, the stability check is passed, so $\gamma_1=16/3$. It can be similarly shown that $\gamma_2=4K/3$, meaning that
\[
\bar{\gamma}=8/3\sqrt{K}.
\]
It then follows from \eqref{eq:stab2} the cyclic feedback interconnection is input-output stable if $K<3/8$. This not only illustrates how to apply the frequency domain test, but also the advantage of the more flexible criterion. For example, the small gain theorem would require that $K<1/4$, and the secant criterion cannot be applied at all since $G_1$ is not output-strictly-passive.

\subsection{Connection to existing approaches}\label{sec:res5}

As noted in the introduction, several existing stability criteria for the cyclic feedback interconnection can be obtained from \Cref{thm:1}. When combined with the frequency domain or state-space techniques detailed above, the following choices of the M\"{o}bius transform
\[
f\funof{z}=\frac{az+b}{cz+d}
\]
correspond the linear time invariant cases of some well known theorems:
\begin{enumerate}
\item The small gain theorem corresponds to the case $f\funof{z}=z$. If this is the case, the inequality in (ii) simplifies to:
\[
\bar{\gamma}<1.
\]
\item The large gain theorem corresponds to the case $f\funof{z}=\tfrac{1}{z}$. If this is the case, the inequality in (ii) simplifies to:
\[
\bar{\gamma}>1.
\]
\item The secant criterion corresponds to the case $f\funof{z}=\tfrac{1}{2}\funof{z+1}$. If this is the case, the inequality in (ii) simplifies to:
\[
\bar{\gamma}<\sec\funof{\tfrac{\pi}{n}}.
\]
\item The secant like criterion based on input feedforward passivity \cite{YP10} corresponds to the case $f\funof{z}=\tfrac{2}{z+1}$. If this is the case, the inequality in (ii) simplifies to:
\[
\bar{\gamma}>\cos\funof{\tfrac{\pi}{n}}.
\]
\end{enumerate}

\begin{remark}
In all the above examples (and in the example in \cref{sec:res4}), it is clear whether large $\bar{\gamma}$ or small $\bar{\gamma}$ is desired. In such cases it makes sense to associate the subsystem dynamics with a notion of size based on the indices $\gamma_k$. For example, in the small gain theorem small $\gamma_k$ is desired. Therefore when finding the scaling of the generalised disk in (3) of the frequency domain test, or when searching for a $\gamma_k$ such that the convex program in \eqref{eq:lmi} is feasible, it makes sense to search for the smallest such $\gamma_k$. This then gives a notion of size tailored to the features of the specific M\"{o}bius transform (and coincides with the H-infinity norm of the transfer function in this case).

Although the inequality in \Cref{thm:1} yields many criteria with this property, this is not the case for every choice of $a$, $b$, $c$ and $d$. For example, if
\[
a=1,b=16,c=1,\;\text{and}\;d=9/8,
\]
when $n=3$ the inequality is satisfied if either $0<\bar{\gamma}<1/40$, or $\bar{\gamma}>1/24$. In such cases (which can only arise if $b^2-a^2>0$ and $d^2-c^2>0$) both the largest and smallest values of $\gamma_k$ are of interest, leading to a kind of mixed large gain/small gain analysis.
\end{remark}

\section{Conclusions}

In this paper stability criteria based on a secant like criterion have been presented. The results unify and generalise many existing conditions that can be used to assess the stability of the cyclic feedback interconnection of linear time-invariant single-input-single-output systems, including the Secant Criterion and the small gain theorem. A small example illustrating the utility of the more general criteria was given, and directions for nonlinear extensions briefly discussed.


\bibliographystyle{IEEEtran}
\bibliography{references.bib}

\begin{thebibliography}{10}
\providecommand{\url}[1]{#1}
\csname url@rmstyle\endcsname
\providecommand{\newblock}{\relax}
\providecommand{\bibinfo}[2]{#2}
\providecommand\BIBentrySTDinterwordspacing{\spaceskip=0pt\relax}
\providecommand\BIBentryALTinterwordstretchfactor{4}
\providecommand\BIBentryALTinterwordspacing{\spaceskip=\fontdimen2\font plus
\BIBentryALTinterwordstretchfactor\fontdimen3\font minus
  \fontdimen4\font\relax}
\providecommand\BIBforeignlanguage[2]{{%
\expandafter\ifx\csname l@#1\endcsname\relax
\typeout{** WARNING: IEEEtran.bst: No hyphenation pattern has been}%
\typeout{** loaded for the language `#1'. Using the pattern for}%
\typeout{** the default language instead.}%
\else
\language=\csname l@#1\endcsname
\fi
#2}}

\bibitem{Thr91}
C.~Thron, ``The secant condition for instability in biochemical feedback
  control — {P}arts {I} and {II},'' \emph{Bulletin of Mathematical Biology},
  vol.~53, no.~3, pp. 383--401, 1991.

\bibitem{TO78}
J.~J. Tyson and H.~G. Othmer, ``The dynamics of feedback control circuits in
  biochemical pathways,'' \emph{Progress in theoretical biology}, vol.~5, pp.
  1--62, 1978.

\bibitem{AS06}
M.~Arcak and E.~D. Sontag, ``Diagonal stability of a class of cyclic systems
  and its connection with the secant criterion,'' \emph{Automatica}, vol.~42,
  no.~9, pp. 1531--1537, 2006.

\bibitem{Arc11}
M.~Arcak, ``Diagonal stability on cactus graphs and application to network
  stability analysis,'' \emph{IEEE Transactions on Automatic Control}, vol.~56,
  no.~12, pp. 2766--2777, 2011.

\bibitem{Son06}
E.~D. Sontag, ``Passivity gains and the “secant condition” for stability,''
  \emph{Systems \& Control Letters}, vol.~55, no.~3, pp. 177--183, 2006.

\bibitem{YP10}
H.~Yu and P.~J. Antsaklis, ``A passivity measure of systems in cascade based on
  passivity indices,'' in \emph{49th IEEE Conference on Decision and Control
  (CDC)}, 2010, pp. 2186--2191.

\bibitem{ZM08}
V.~Zahedzadeh, H.~J. Marquez, and T.~Chen, ``On the input-output stability of
  nonlinear systems: Large gain theorem,'' in \emph{2008 American Control
  Conference}, 2008, pp. 3440--3445.

\bibitem{PD93}
A.~Packard and J.~Doyle, ``The complex structured singular value,''
  \emph{Automatica}, vol.~29, no.~1, pp. 71--109, 1993.

\bibitem{PW98}
J.~W. Polderman and J.~C. Willems, \emph{Introduction to Mathematical Systems
  Theory: A Behavioral Approach}.\hskip 1em plus 0.5em minus 0.4em\relax
  Springer Science \& Business Media, 1998, no.~26.

\bibitem{Vin00}
G.~Vinnicombe, \emph{Uncertainty and Feedback, H-infinity Loop-Shaping and the
  $\nu$-Gap Metric}.\hskip 1em plus 0.5em minus 0.4em\relax Imperial College
  Press, 11 2000.

\bibitem{Wil71}
J.~C. Willems, ``Least squares stationary optimal control and the algebraic
  riccati equation,'' \emph{IEEE Transactions on Automatic Control}, vol.~16,
  no.~6, pp. 621--634, 1971.

\bibitem{Wil72}
------, ``Dissipative dynamical systems, parts {I--II}: Linear systems with
  quadratic supply rates,'' \emph{Archive for rational mechanics and analysis},
  vol.~45, no.~5, pp. 321--393, 1972.

\end{thebibliography}

\end{document}